\newtheorem{thm}{Theorem}
\newtheorem{prop}{Proposition}
\newtheorem{lem}{Lemma}
\newtheorem{cor}{Corollary}
\newtheorem*{mainthm}{Main Theorem}
\theoremstyle{definition}
\newcommand{\f}[1]{\mathbb{#1}}
\def\bN{\mathbb{N}}
\def\om{\omega}
\def\Om{\Omega}
\newcommand{\m}[1]{\mathcal{#1}}
\newcommand{\gpr}[2]{ \left\langle #1 \mid #2 \right\rangle}
\newcommand{\gp}[1]{ \left\langle #1 \right\rangle}
\title[]{On the condensation property of the Lamplighter groups and groups of Intermediate growth}
\author{Mustafa G\"okhan  Benli, Rostislav Grigorchuk}
\date{05/07/2014\\ The  authors were supported by NSF grant DMS-1207699}
\begin{document}

\begin{abstract}
The aim of this short note is to   revisit  
some old  results about  groups of  intermediate  growth  and  groups  of  the 
lamplighter  type and to show that the Lamplighter group $L=\mathbb{Z}_2\wr \mathbb{Z}$  
is a condensation group and  has  a minimal  presentation  
by  generators  and  relators. 
The  condensation  property  is  achieved  by  showing  that $L$  belongs  
to  a  Cantor  subset  of  the space $\mathcal{M}_2$ 
of  marked 2-generated groups consisting  mostly of  groups of  intermediate  growth.
\end{abstract}

 \maketitle
 
 \section{Introduction}

The modern development of group theory requires significant use of methods of geometry, topology, probability 
and measure theory, the theory of models etc. The space $\m{M}_k$ of marked
$k$-generated groups, introduced in \cite{MR764305} plays an important role in this development. It is a compact totally
disconnected metrizable space and it is important to know which groups belong to its perfect kernel (or condensation part), 
which is homeomorphic to a Cantor set. Groups in the perfect kernel are called \textit{condensation groups}.
The aim of this note is to revisit some results of \cite{MR764305} and to use them to show that the so called
Lamplighter group $L=\f{Z}_2 \wr \f{Z}$, which is a popular object of study (see for example 
\cite{MR1866850,grigorch_kravchenko}) 
belongs to a Cantor set and hence is a condensation group.

 Let $\Omega=\{0,1,2\}^\f{N}$ be the set of all infinite sequence over $\{0,1,2\}$ with the product (Tychonoff) topology.

\begin{mainthm}  There exists a subset 
$\m{L}=\{(L_\om,T_\om)\mid \om \in \Om\}\subset \m{M}_2$ with the following properties:
 
 \begin{itemize}  \item[a)] $\m{L}$ is homeomorphic to $\Omega$ (and hence is a Cantor set),
  \item[b)] If $\om \in \Om$ is not eventually constant, then $L_\om$ has intermediate growth.
  \item[c)] If $\om \in \Om$ is a constant sequence then $L_\om\cong L$.
  \item[d)] All groups in $\m{L}$ are condensation groups.
 \end{itemize}
\end{mainthm}

  A simple argument shows that a group possessing an infinite minimal presentation is a condensation group.
  Surprisingly, it was observed in \cite{JMJ:9135731} that there are finitely generated groups
  which do not have a minimal presentation.
  It follows from \cite{MR0120269} that groups of the form $H\wr G$ where 
  $H$ and $G$ are infinite and finitely generated are not finitely presented and it was observed in \cite{MR2895067}
  that such groups are condensation groups. It is probably well known (as indicated in 
  \cite{JMJ:9135731}) that the standard presentation 
    $$L=\gpr{s,t}{s^2,[s,s^{t^i}]\;i\ge 1}$$
    is minimal. A proof of this fact using ideas of \cite{MR0120269} is presented for completeness. This
    provides an alternative proof of the fact that $L$ is a condensation group.

An effective way to build large families of condensation groups is to construct closed
subsets $X\subset \m{M}_k,k\ge2$ homeomorphic to a Cantor set. 
Such families were constructed in \cite{MR764305,MR784354,MR1760424,MR2318543}. It will be interesting to produce 
such families based on new ideas.

 \section{Preliminaries}

 For a topological space $X$, let $X'$ denote its set of accumulation points. For any ordinal
$\alpha$ define the spaces $X^{(\alpha)}$ inductively as follows:
$X^{(0)}=X,X^{(\alpha+1)}=\left( X^{(\alpha)} \right)'$ and $\displaystyle X^{(\lambda)}=\bigcap_{\beta<\lambda}X^{(\beta)}
$ if $\lambda$ is a limit ordinal. If $X$ is a Polish space, (i.e., a completely 
metrizable, separable space) for   some countable  ordinal $\alpha_0$ we 
will have $X^{(\alpha_0)}=X^{(\alpha)}$ for all $\alpha\ge \alpha_0$ (see \cite[Theorem 6.1]{MR1321597}).
The least ordinal
with this property is called the \emph{Cantor-Bendixon rank} of $X$ and will be denoted
by $rk_{CB}(X)$.
The set $X^{(\alpha_0)}$
is called the \emph{perfect kernel} (or condensation part) of $X$ which will be denoted by
$\kappa(X)$. Note that if nonempty, $\kappa(X)$ is homeomorphic to a Cantor set and $\kappa(X)$ is empty
if and only if $X$ is countable.
Points in $\kappa(X)$ are called condensation points and can be characterized as 
points for which every open neighborhood is uncountable (see \cite[I.6]{MR1321597}).

 Let $\m{M}_k$ denote the space marked groups consisting of pairs $(G,S)$ where
 $G$ is a group and $S$ is an ordered set of (not necessarily distinct) set of $k$ generators.
 Two marked groups $(G,S)$ and $(H,T)$ in $\m{M}_k$ are identified whenever the 
 map $s_i \mapsto t_i, i=1,\ldots,k$ extends to an isomorphism. Two points $(G,S)$ and $(H,T)$  
 are of distance  $\le 2^{-N}$
if the Cayley graphs of $(G,S)$ and $(H,T)$ have isomorphic balls of radius $N$. 
This (ultra) metric makes $\m{M}_k$ into a compact, totally disconnected, separable space.
It follows from the definition that a sequence $(G_n,S_n)\in \m{M}_k$ converges to $(G,S)\in \m{M}_k$, if and only
if, for every element $w\in F_k$ (the free group of rank $k$), there exists $N=N_w\ge0$, such that the
the relation $w=1$ holds in $G$
if and only if it holds in $G_n$ for $n 
\ge N$.

An important  problem of geometric group theory (raised in \cite{MR2195454}) is the 
identification of $rk_{CB}(\m{M}_k)$
for $k\ge2$. It follows from \cite{MR2895067} that the lower bound
$rk_{CB}(\m{M}_k)> \om^\om,k\ge 2$ holds. By a classical result of B.H. Neumann \cite{zbMATH03026064} 
there exists uncountably many non-isomorphic 2-generated groups. Therefore $\kappa(\m{M}_k)$
is a Cantor set for all $k\ge 2$. A finitely generated group $G$ is called a \emph{condensation group}, if for some generating
set $S$ of size $k$ the pair $(G,S)$ belongs to $\kappa(\m{M}_k)$. It follows that this property does not 
depend on the generating set (see \cite[Lemma 1]{MR2278053}).

 In \cite{MR764305} the second author constructed 
 Cantor sets $  \m{  G}\subset \m{M}_k$  consisting essentially of groups of intermediate
growth.
Clearly, groups belonging to these families lie in the condensation part of $\m{M}_k$.
In general, it is a challenging problem to identify which groups are in the condensation
part. It is expected that every group of intermediate growth is a condensation group.
In contrast, it is easy to observe that virtually nilpotent 
groups are not condensation.
 In \cite{MR2895067,JMJ:9135731} condensation properties of metabelian groups were  considered and
 it was proven that restricted wreath products $H\wr G$ of two finitely generated infinite groups 
 are condensation groups \cite[Proposition 8.1]{MR2895067}. Also, by \cite{MR1760424}
 every non-elementary hyperbolic groups is a condensation group.
 
%

Let us briefly recall the groups constructed  in \cite{MR764305}. 
Although the original definition is in terms of measure preserving transformations
of the unit interval, we will give here a definition in terms of automorphisms of rooted trees.
Let   $\Om$ denote the set all infinite sequences over 
the alphabet $\{0,1,2\}$. 
We identify $\Om$ with
the product $\{0,1,2\}^{\bN}$ and endow it with the product topology. Let $\tau :\Om \to \Om$
be the shift transformation, i.e., $\tau(\om)_n=\om_{n+1}$.
For each 
$\om \in \Omega$ we will define a subgroup $G_\om$
of $Aut(\mathcal{T}_2)$, where the latter denotes the automorphism group of the binary
rooted tree $\m{T}_2$ whose vertices are identified with the set of finite sequences $\{0,1\}^\ast$.
Each group $G_\om$ is the subgroup generated by the four automorphisms denoted by
$a,b_\om,c_\om,d_\om$
whose actions onto the tree is as follows:

\smallskip

For  $v\in\{0,1\}^*$
$$a(0v)=1v \;\text{and} \; a(1v)=0v  $$

$$\begin{array}{llllll}
 b_\om(0v)=& 0 \beta(\om_1)(v) &   c_\om(0v)=& 0 \zeta(\om_1)(v)
  &  d_\om(0v)= &0 \delta(\om_1)(v) \\

   b_\om(1v)=& 1 b_{\tau (\om)}(v) &      c_\om(1v)= & 1 c_{\tau \om}(v)
   & d_\om(1v)= & 1 d_{\tau \om}(v), \\

\end{array}
$$
where
$$
\begin{array}{ccc}
 \beta(0)=a & \beta(1)=a & \beta(2)=e \\
  \zeta(0)=a & \zeta(1)=e & \zeta(2)=a \\
   \delta(0)=e & \delta(1)=a & \delta(2)=a \\
\end{array}
$$
and $e$ denotes the identity. 

Note that from the definition, the following relations are immediate:
$$
 a^2=b_\om^2=c_\om^2=d_\om^2=b_\om c_\om d_\om =e
$$
 
Denoting by $S_\om=\{a,b_\om,c_\om,d_\om\}$, we obtain a subset $\{(G_\om,S_\om) \mid \om \in \Om\}\subset \m{M}_4$.
In \cite{MR764305} it was observed that this subset is not closed. 
It was also shown in \cite{MR764305} that modifying countably many groups in this family, one obtains
a closed subset $\m{G}=\{(G_\om,S_\om)\mid \om \in \Om\}$ with the following properties:
 
 \begin{thm}[\cite{MR764305}]\label{grig84} \mbox{}
 \begin{itemize}
  \item[1)] $\m{G}$ is homeomorphic to $\Om$ via the map $\om \mapsto (G_\om,S_\om)$,
  \item[2)] If in $\om \in \Om$  all symbols $\{0,1,2\}$ appear infinitely often, then $G_\om$ is a 2-group,
  \item[3)] For $\om \in \Om$ which is not eventually constant (i.e., is not constant after some point), $G_\om$ has intermediate growth,
  \item[4)] If $\om \in \Om$ is eventually constant, then $G_\om$ is virtually metabelian of exponential growth.
 \end{itemize}

 \end{thm}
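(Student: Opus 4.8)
The plan is to treat the topological statement (1) separately from the group-theoretic statements (2)--(4), with the common engine being the wreath recursion
\[
 a=(e,e)\sigma,\qquad b_\om=(\be(\om_1),b_{\tau\om}),\qquad c_\om=(\zeta(\om_1),c_{\tau\om}),\qquad d_\om=(\de(\om_1),d_{\tau\om}),
\]
which exhibits the first-level stabilizer of $G_\om$ as a subgroup of $G_{\tau\om}\times G_{\tau\om}$ via $g\mapsto(g_0,g_1)$. For (1), I would first prove continuity of $\om\mapsto(G_\om,S_\om)$ using the convergence criterion recorded above: whether a word $w\in F_4$ is trivial in $G_\om$ is decided by pushing $w$ down the tree through the recursion, and this depends only on a prefix $\om_1\cdots\om_m$ with $m$ controlled by $\ell(w)$; hence agreement of two parameters on a long prefix forces their marked groups to have large isomorphic balls. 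Injectivity (after the modification of the countably many eventually constant parameters) comes from producing, for $\om\ne\om'$, an explicit word that is trivial in one group and not the other. Since $\Om$ is compact and $\m{M}_4$ Hausdorff, the continuous injection is automatically a homeomorphism onto its image, and the necessity of redefining the eventually constant groups is precisely the statement that the naive family fails to be closed there.

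For (2) I would prove that $G_\om$ is an infinite $2$-group by induction on word length. Every generator is an involution, and every element is conjugate into $\mathrm{St}_\om(1)$, so it suffices to bound the order of $g=(g_0,g_1)$. The key observation is that for each value of $\om_1$ exactly one of $b_\om,c_\om,d_\om$ has trivial left section (namely $d,c,b$ according as $\om_1=0,1,2$), which produces a uniform contraction $\ell(g_0)+\ell(g_1)\le\eta\,\ell(g)+C$ with $\eta<1$. Iterating drives every element to sections of bounded length, and the hypothesis that all three symbols occur infinitely often guarantees that this killing of syllables recurs along every branch, forcing every element to have finite ($2$-power) order; infiniteness is clear since the action on the tree is level-transitive.

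Statement (3) is the heart of the theorem and, I expect, the main obstacle. The upper bound is the same contraction run quantitatively: the estimate $\ell_{\tau\om}(g_0)+\ell_{\tau\om}(g_1)\le\eta\,\ell_\om(g)+C$ holds with a single $\eta\le\tfrac34$ that does not degenerate as $\om$ varies, and substituting it into the embedding $\mathrm{St}_\om(1)\hookrightarrow G_{\tau\om}\times G_{\tau\om}$ yields, up to polynomial factors, a recursion $\ga_\om(n)\lesssim\ga_{\tau\om}(\eta n)^2$ for the growth functions; iterating along the shift orbit of $\om$ gives $\ga_\om(n)\le e^{Cn^{\al}}$ with $\al=\al(\eta)<1$, uniformly in $\om$, so every $G_\om$ has subexponential growth. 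The genuinely delicate point is the lower bound, i.e.\ ruling out polynomial growth for every not eventually constant $\om$. Having established subexponential growth, it is enough to show $G_\om$ is not virtually nilpotent and invoke Gromov's theorem: when all three symbols recur this is immediate from (2), since an infinite finitely generated torsion group cannot be virtually nilpotent, and for the remaining parameters (only one or two symbols recurring, so the group need not be torsion) one argues via the branch/self-similar structure that no finite-index subgroup can be nilpotent, alternatively running Grigorchuk's direct counting argument which exploits the reverse embedding $G_{\tau\om}\times G_{\tau\om}\hookrightarrow G_\om$ to propagate superpolynomiality down the shift orbit. Combining the two bounds gives intermediate growth.

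Finally, for (4) the recursion degenerates: after finitely many applications of $\tau$ the triple of substitutions is constant, so one of $b_\om,c_\om,d_\om$ becomes eventually trivial and the uniform contraction of (3) breaks down. The group $G_\om$ that belongs to the closed family $\m{G}$ at such a parameter is the \emph{limit} of the groups at nearby non-eventually-constant parameters --- this is exactly the modification forced in (1) --- and I would identify it explicitly as a concrete virtually metabelian group possessing a finite-index subgroup that is an extension of an abelian group by an abelian group of lamplighter type. Exponential growth is then read off directly, either by counting reduced words or by exhibiting a free subsemigroup (available since a finitely generated solvable group of non-polynomial growth contains one). This degeneracy both explains the failure of closedness repaired in (1) and the transition from intermediate to exponential growth.
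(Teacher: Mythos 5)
You are attempting a statement the paper itself never proves: Theorem \ref{grig84} is quoted from \cite{MR764305}, so your attempt can only be compared with the argument in that source (and, for constant sequences in part 4, with the paper's own Theorem \ref{thm2}, which is indeed close to your sketch of that case). Measured against that, your outlines of (1) and (4) have the right shape — though note that even in (1) the assertion ``triviality of $w$ in $G_\om$ depends only on a prefix of $\om$ of length controlled by $\ell(w)$'' is exactly what fails at eventually constant parameters (the naive tree automorphism $d_{000\ldots}$ is trivial, while $d_\om\ne 1$ for every $\om\ne 000\ldots$), so that argument is circular until the modified limit groups are actually constructed. The serious problems are in the engine you propose for (2) and (3).

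The claimed contraction $\ell(g_0)+\ell(g_1)\le\eta\,\ell(g)+C$ with $\eta<1$ is false. In the level-one decomposition a letter $x\in\{b_\om,c_\om,d_\om\}$ contributes two letters to the pair of sections unless its left part ($\beta(\om_1)$, $\zeta(\om_1)$ or $\delta(\om_1)$) is trivial, and each value of $\om_1$ kills only \emph{one} of the three types; the general estimate is therefore $\ell(g_0)+\ell(g_1)\le\ell(g)+1$, with a gain proportional only to the number of letters of the single killed type. Concretely, $(ab_\om)^{2k}=\left((b_{\tau\om}a)^k,(ab_{\tau\om})^k\right)$ whenever $\om_1\ne 2$, so the total section length equals $\ell(g)$ exactly, and this persists down every level until the symbol $2$ occurs in $\om$ — which may take arbitrarily long, or never happen at all for a not-eventually-constant sequence such as $0101\ldots$. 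So no fixed $\eta<1$ and no fixed number of levels gives contraction, even for a single fixed $\om$; such words merely lie in the dihedral subgroup $\gp{a,b_\om}$, harmless for growth but fatal for your mechanism. The same misidentification affects (2): torsion is proved in \cite{MR764305} by induction on word length in syllable form, each element having order $2^k$ with $k$ governed by how long one must wait for all three symbols to recur — which is precisely why the hypothesis of (2) is what it is, and why the exponent is unbounded.

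Worse, the uniformity you assert in (3) — a single $\al<1$ with $\ga_\om(n)\le e^{Cn^{\al}}$ for all $\om$ — is not merely unproved but contradicts parts (1) and (4) of the very statement being proved. By (1), if $\om^{(m)}=0^m1212\ldots$ then $(G_{\om^{(m)}},S_{\om^{(m)}})\to(G_{000\ldots},S_{000\ldots})$ in $\m{M}_4$, so for each $R$ and all large $m$ the balls of radius $R$ in the two Cayley graphs coincide, whence $\ga_{\om^{(m)}}(R)=\ga_{000\ldots}(R)$, which by (4) grows exponentially in $R$. Hence $\sup_\om\ga_\om(R)$ over not-eventually-constant $\om$ is exponentially large and no uniform subexponential bound exists. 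This non-uniformity is not a defect but the raison d'\^{e}tre of the family in \cite{MR764305}: it is what produces continuum many growth degrees, arbitrarily close to exponential. Accordingly the upper bound in (3) must be proved non-uniformly for each fixed $\om$, exploiting every occurrence of every symbol along $\om$ and treating separately the words with few letters of the killed types; contrary to your assessment, this upper bound — not the lower one — is the hard half. For the lower bound, \cite{MR764305} gives the explicit estimate $\ga_\om(n)\succeq e^{\sqrt{n}}$ for all $\om$; your alternative route (subexponential, plus not virtually nilpotent, plus Gromov) is legitimate in the torsion case but is left essentially unsubstantiated by you in the non-torsion case, where ``the branch/self-similar structure'' is invoked without an argument.
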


 \section{Proof of the Main Theorem}
 
 We start with the following basic lemma:
 
  \begin{lem} \label{lem1}
  Suppose that  $\{(G_n,S_n)\}$ is a sequence in $\m{M}_k$ converging to $(G,S)$.  Let
  $F_k$ be the free group of rank $k$, with basis $\{x_1,\ldots,x_k\}$ and
  let   $\pi:F_k\to G$ , $\pi_n :F_k \to G_n$ be the canonical maps.
   Given $w_1,\ldots,w_m \in F_k$, let  
   $T=\{\pi(w_1),\ldots,\pi(w_m)\}$ , $T_n=\{\pi_n(w_1),\ldots,\pi_n(w_m)\}$ and $ H=\gp{T}\le G,H_n=\gp{T_n}\le G_n$. Then
   the sequence $\{(H_n,T_n)\}$ converges to $(H,T)$ in $\m{M}_m$.
 \end{lem}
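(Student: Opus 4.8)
The plan is to translate the claimed convergence of $\{(H_n,T_n)\}$ into the characterization of convergence given in the Preliminaries: a sequence of marked groups converges to a limit precisely when, for each word, the relation it encodes eventually stabilizes between the limit group and the tail of the sequence. So I need to set up the right ``abstract'' free group for the subgroups $H, H_n$ and relate relations there to relations in $F_k$.

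First I would introduce a free group $F_m$ on a basis $\{y_1,\dots,y_m\}$, together with the homomorphism $\phi: F_m \to F_k$ sending $y_j \mapsto w_j$. The canonical surjections marking $H$ and $H_n$ are then the maps $\rho = \pi \circ \phi : F_m \to H$ and $\rho_n = \pi_n \circ \phi : F_m \to H_n$; by construction $\rho(y_j) = \pi(w_j)$ is the $j$-th element of $T$ and similarly for $T_n$, so these really are the marking maps for $(H,T)$ and $(H_n,T_n)$. The key point is that for any word $u \in F_m$, the relation $u = 1$ holds in $(H,T)$ if and only if $\rho(u) = \pi(\phi(u)) = 1$, i.e.\ if and only if the relation $\phi(u) = 1$ holds in $(G,S)$; and the identical statement holds with $H_n, \pi_n$ in place of $H, \pi$.

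\textbf{Conclusion via stabilization.} Fix an arbitrary $u \in F_m$ and set $v = \phi(u) \in F_k$. Since $\{(G_n,S_n)\}$ converges to $(G,S)$, the convergence criterion supplies an integer $N = N_v$ so that for all $n \ge N$ the relation $v = 1$ holds in $G_n$ if and only if it holds in $G$. By the equivalence noted above, this says exactly that for all $n \ge N$ the relation $u = 1$ holds in $H_n$ if and only if it holds in $H$. As $u \in F_m$ was arbitrary, the same convergence criterion now yields that $\{(H_n,T_n)\}$ converges to $(H,T)$ in $\m{M}_m$, which is the desired conclusion.

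The argument is essentially a diagram chase, so there is no deep obstacle; the one place demanding care is the bookkeeping of which free group each word lives in and ensuring that ``$u = 1$ in $H$ iff $\phi(u) = 1$ in $G$'' is used in the correct direction. A subtle point worth stating explicitly is that $H$ is genuinely generated by $T$ as a marked group with the intended marking $y_j \mapsto \pi(w_j)$, so that $\rho$ is indeed the canonical surjection defining $(H,T) \in \m{M}_m$; this is immediate from $H = \gp{T}$ but should be recorded to justify applying the convergence criterion to the sequence $\{(H_n,T_n)\}$. No genuinely hard step is anticipated.
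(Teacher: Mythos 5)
Your proof is correct and follows essentially the same route as the paper: both introduce the free group $F_m$, the substitution homomorphism $F_m \to F_k$ sending $y_j \mapsto w_j$, and factor the marking maps of $H, H_n$ through it to transfer the relation-stabilization criterion from $(G_n,S_n) \to (G,S)$ to $(H_n,T_n) \to (H,T)$. If anything, your write-up makes the final stabilization step (fixing $u$, passing to $v=\phi(u)$, and invoking $N_v$) more explicit than the paper's rather terse conclusion.
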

 
 \begin{proof}
Let $F_m$ be the free group of rank $m$ with basis $\{y_1,\ldots,y_m\}$ and let
   $\gamma:F_m\to H$  and $\gamma_n :F_k \to H_n$ be the canonical maps. Also, let
$p:F_m \to F_k$ be the group homomorphism defined by $p(y_i)=w_i\;,\;i=1,\ldots,m$. Note that we have
the following: $$\gamma_n = \pi_n \circ p \;\;\; \text{for every}\;\;n $$
and $$\gamma=\pi \circ p $$
It follows that, given $w\in F_m$, $w=1$ in $H$ if and only if $p(w)=1$ in $G$. This shows
that the sequence $\{(H_n,T_n)\}$ converges to $(H,T)$ in $\m{M}_m$.
 \end{proof}

 The following is a description of  the structure of the group $G_{000\ldots}$.

\begin{thm} \label{thm2}
 The group $ G_{000\ldots}$ is isomorphic to the group
 $L\rtimes \mathbb{Z}_2 $ where $L=\mathbb{Z}_2 \wr \mathbb{Z}$ is the Lamplighter group
 given by presentation $\gpr{s,t}{s^2,[s,s^{t^i}], i \ge 1
 }$ and   $\f{Z}_2$ acts on $L$ 
 by the automorphism 
$$ s \mapsto s^t$$
$$ t \mapsto t^{-1}$$
\end{thm}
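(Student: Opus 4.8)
The first thing I would settle is what $G_{000\ldots}$ means, because substituting the constant sequence directly into the recursion is degenerate: since $\de(0)=e$ one gets $d_{000\ldots}=(e,d_{000\ldots})$, which forces $d_{000\ldots}=e$ and $b_{000\ldots}=c_{000\ldots}$, so the literal group is only the infinite dihedral group. This is exactly why the family fails to be closed, and why in the closed family $\m{G}$ the point $000\ldots$ is one of the countably many \emph{modified} groups; by Theorem \ref{grig84}(4) it is virtually metabelian of exponential growth. I would therefore read $G_{000\ldots}$ as the limit in $\m{M}_4$ of honest Grigorchuk groups, $G_{000\ldots}=\lim_n G_{\om^{(n)}}$ along, say, $\om^{(n)}=0^n(012)^\infty\to 000\ldots$, so that each $G_{\om^{(n)}}$ is an infinite torsion $2$-group of intermediate growth, and try to recognize this limit as $L\rtimes\f{Z}_2$.

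Next I would fix the target as a marked group. Realize $L\rtimes\f{Z}_2$ as $A\rtimes D_\infty$ with $A=\bigoplus_{i\in\f{Z}}\f{Z}_2=\gp{e_i\mid i\in\f{Z}}$ and $D_\infty=\gp{u,t}$ acting by $te_it^{-1}=e_{i+1}$, $ue_iu^{-1}=e_{-1-i}$, $utu^{-1}=t^{-1}$; taking $s=e_0$ this is precisely the prescribed involution $s\mapsto s^t$, $t\mapsto t^{-1}$. Then mark the group by four involutions
\[ a=u,\qquad b=ut,\qquad c=ut\,e_{-1},\qquad d=e_{-1}. \]
A short computation gives $a^2=b^2=c^2=d^2=bcd=e$, and since $ab=t$ while the $t$-conjugates of $d=e_{-1}$ run through all the $e_i$, these four elements generate $L\rtimes\f{Z}_2$. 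Thus $(L\rtimes\f{Z}_2,(a,b,c,d))$ is a genuine point of $\m{M}_4$ in which $ab$ plays the role of the translation and $d$ that of the lamp $s^t$.

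The core step is to show this point equals $\lim_n(G_{\om^{(n)}},S_{\om^{(n)}})$: for every word $w$ in $a,b,c,d$ one must verify that $w=e$ in $L\rtimes\f{Z}_2$ if and only if $w=e$ in $G_{\om^{(n)}}$ for all large $n$; closedness of $\m{G}$ and uniqueness of limits then give $G_{000\ldots}\cong L\rtimes\f{Z}_2$. Two mechanisms are at work. The element $ab$ has finite order in the $2$-group $G_{\om^{(n)}}$ but infinite order in the limit, so its order is a power of $2$ tending to infinity and the relations $(ab)^{2^{k}}=e$ recede to infinity, creating the $\f{Z}$-direction. And the conjugates $(ab)^{-i}d(ab)^{i}$ must come to commute, producing the abelian lamp group. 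I would check both by the usual section computation: restrict to the level-one stabilizer $\mathrm{St}(1)$ and apply the $0$-rule of the recursion, $b_\om=(a,b_{\tau\om})$, $c_\om=(a,c_{\tau\om})$, $d_\om=(e,d_{\tau\om})$, to reduce each relation to shorter ones for the shifted (still non-degenerate) sequence $\tau\om^{(n)}$, tracking how the support of $d$ is pushed along the ray $1^\infty$ under conjugation.

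The step I expect to be the real obstacle is the converse, \emph{no-extra-relations} direction, i.e. injectivity: one must rule out any identity among the lamps that would collapse $A=\bigoplus_{\f{Z}}\f{Z}_2$ onto a proper quotient in the limit. Concretely, although every element of $G_{\om^{(n)}}$ has finite order, the lamps $(ab)^{-i}d(ab)^{i}$ over any fixed finite range of $i$ must remain commuting and $\f{Z}_2$-independent once $n$ is large, the first relations among them appearing only at a scale that recedes with $n$. Here the fine structure of the eventually constant groups from \cite{MR764305} is essential; it shows that in the limit the kernel of $G_{000\ldots}\to D_\infty$ (the map that kills $d$, equivalently identifies $b$ with $c$) is exactly $\bigoplus_{\f{Z}}\f{Z}_2$, carrying the $D_\infty$-action computed above. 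Once the abelian kernel and the quotient $D_\infty=\gp{\bar a,\bar b}$ are pinned down with the stated action, the extension is forced to be $A\rtimes D_\infty=L\rtimes\f{Z}_2$, completing the identification.
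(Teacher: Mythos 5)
Your reading of the setup is correct and genuinely insightful: the literal recursion at $\om=000\ldots$ does degenerate ($d=(e,d)$ forces $d=e$ and $b=c$, leaving only $D_\infty$), so $G_{000\ldots}$ must be the modified group of \cite{MR764305}, which, the family $\m{G}$ being closed and homeomorphic to $\Om$, is the same as the limit in $\m{M}_4$ that you propose. Your explicit marking of the target, $a=u$, $b=ut$, $c=ute_{-1}$, $d=e_{-1}$ in $A\rtimes D_\infty$ with $A=\bigoplus_{\f{Z}}\f{Z}_2$, also checks out ($ab=t$, $d=s^t$, and all the relations $a^2=b^2=c^2=d^2=bcd=e$ hold). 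The genuine gap is in the step you yourself flag as the obstacle. To identify the limit with your marked group you must show that the normal closure $D$ of $d$ in $G_{000\ldots}$ is exactly $\bigoplus_{\f{Z}}\f{Z}_2$, shifted freely by $ab$, with $ab$ of infinite order and $D\cap\gp{ab}=1$. At that point you write that the ``fine structure of the eventually constant groups from \cite{MR764305}'' shows the kernel of $G_{000\ldots}\to D_\infty$ is exactly $\bigoplus_{\f{Z}}\f{Z}_2$ with the computed action. That is not a proof; it is a restatement of the theorem. What \cite{MR764305} actually supplies (and what the present paper quotes from it) is much weaker: the embedding $\psi:H\to G\times G$ on the level-one stabilizer, the fact that $D$ is abelian, and $G/D\cong D_\infty$ (Lemma 6.1 there). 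The passage from those facts to the lamplighter structure is precisely the content that has to be established, and your proposal never establishes it.

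The paper closes exactly this gap by purely algebraic computations inside the modified group, with no limits at all: using $\psi$ it proves (i) $D=\gp{d^g\mid g\in\gp{a,b}}$, via the inductive identity $(d^g)^c=(d^g)^b$; (ii) the elements $t_n$ (namely $d^{(ab)^n}$ for $n\ge 0$ and $d^{(ab)^{-n-1}a}$ for $n<0$) are pairwise distinct and generate a copy of $\bigoplus_{\f{Z}}\f{Z}_2$ on which $ab$ acts by $t_n^{ab}=t_{n+1}$; (iii) $ab$ has infinite order, since $\psi((ab)^2)=(ba,ab)$; and (iv) $D\cap\gp{ab}=1$, by a parity and coordinate argument. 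Your limit strategy also carries burdens you only sketch: even for the direction you consider easy (relations of $L\rtimes\f{Z}_2$ eventually hold in $G_{\om^{(n)}}$), a word-by-word verification cannot begin until you exhibit a normal generating set for the kernel of $F_4\to L\rtimes\f{Z}_2$, i.e.\ a presentation of $L\rtimes\f{Z}_2$, which is itself nontrivial since the group is not finitely presented. If you want to salvage the approach, the efficient repair is to import the paper's $\psi$-computations to pin down $D$ and $\gp{ab}$ directly in the limit group --- but once you do that, the approximating sequence $\om^{(n)}$ and the convergence argument become superfluous, which is exactly why the paper argues algebraically in $G_{000\ldots}$ itself.
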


\begin{proof}
 Let us denote  $ G_{000\ldots}$ by $G$ and denote
 its canonical generators by $a,b,c,d$. 
 Let $H$ be the subgroup of $G$ generated by the elements $b,c,d,b^a,c^a,d^a$.
 There exists an embedding  (see \cite{MR764305}) 
$$
\begin{array}{ccc}
 \psi : H & \rightarrow & G  \times  G \\
 b & \mapsto    & (a,b) \\
c & \mapsto    & (a,c) \\
d & \mapsto    & (1,d) \\\
b^a & \mapsto    & (b,a) \\
c^a & \mapsto    & (c,a) \\
d^a & \mapsto    & (d,1) \\
\end{array}
$$
Let $D=\gp{\gp{d}}$ be the normal closure of $d$ in $G$. 
By induction on word length one can see that $D$ is 
an abelian group (see \cite[Lemma 6.1]{MR764305}).

We claim that $D=\gp{d^g \mid g \in \gp{a,b}}$.
Let us denote the right hand side by $T$. Clearly $T$ is contained in $D$. 
It suffices to show $T$ is normal. 
Since $bcd=1$ it is enough to show that $(d^g)^c \in T$ for all $g\in \gp{a,b}$. 
By induction on $k$ one can see  that the following equality holds:
$$
\psi(d^{(ab)^n}) = \left\{
\begin{array}{cl}
   (1,d^{(ab)^k}) & , \;n=2k \\
   (d^{(ab)^ka},1) & , \;n=2k+1 \\
    \end{array}
\right.$$
We will show  by induction on $|g|$ that $(d^g)^c=(d^g)^b$. Suppose $|g|=1$, the case $g=b$ is obvious 
since $bcd=1$. If $g=a$, we have $\psi((d^a)^b)=\psi((d^a)^c)=(d^a,1)$ and hence $(d^g)^c=(d^g)^b$. Now
assume $|g|>1$. Since $d^b=d$ we can assume that $g$ starts with $a$. There are two cases, either $g=(ab)^n$
or $g=(ab)^na$ for some $n$. In the first case (using induction assumption)
$$
\psi((d^{(ab)^n})^c) = \left\{
\begin{array}{cl}
   (1,(d^{(ab)^k})^c)=(1,(d^{(ab)^k})^b) & , \;n=2k  \\
   ((d^{(ab)^ka})^a,1)=(d^{(ab)^k},1) & , \;n=2k+1\\
    \end{array}
\right.$$
and in the second case
$$
\psi((d^{(ab)^na})^c) = \left\{
\begin{array}{cl}
   ((d^{(ab)^k})^a,1)& , \;n=2k  \\
   (1,(d^{(ab)^ka})^c)=(1,(d^{(ab)^k})^b) & , \;n=2k+1\\
    \end{array}
\right.$$
In any case $\psi((d^g)^c)=\psi((d^g)^b)$.  This shows $T$ is normal and hence $D=T$.

Now
letting 
$$
t_n = \left\{
\begin{array}{cl}
   d^{(ab)^n} & n \geq 0  \\
   d^{(ab)^{-n-1}a} & n < 0\\
    \end{array}
\right.$$
we see that $T=\gp{t_n \mid n\in \f{Z}}$. Looking at $\psi(t_n)$ we see that the $t_n$ are mutually distinct, 
therefore $\displaystyle T\cong \prod_{\f{Z}} \f{Z}_2$.

Since $\psi((ab)^2)=(ba,ab)$ , it follows that the element $ab$ is of infinite order in $G$.  

We will show that the subgroups $D$ and $\gp{ab}$ intersect trivially. Suppose not, then $d^g=(ab)^n$ for some 
$g \in \gp{a,b}$ and $n\in \f{Z}$. Necessarily $n$ has to be even since left hand side of $d^g=(ab)^n$ has 
even number of $a$'s. 
If $n=2k$ then $\psi((ab)^{2k})=((ba)^k,(ab)^k)$ whereas $\psi(d^g)= (d^h,1)$ or $(1,d^h)$
for some element $h \in G$. It follows that $(ab)^k=1$ which is a contradiction since $ab$
has infinite order.

Now the subgroup $K = D \rtimes \gp{ab}$ is isomorphic to $(\f{Z}_2^{\infty})\rtimes \f{Z} \cong \f{Z}_2 \wr 
\f{Z}$ which is the Lamplighter group. This is true since we have 
$$t_n^{ab}=t_{n+1} , \quad n \in \f{Z}$$
and hence the generator $\gp{ab}$  acts on $D$ by shifting its generators.

Conjugating the generators of $K$ 
by the generators of $G$ we see that $K$ is a normal subgroup. The quotient  $G / D$ is isomorphic to the infinite 
dihedral group $D_{\infty}$ (see \cite[Lemma 6.1]{MR764305}) and maps onto the quotient $G/K$. The kernel of this homomorphism contains the image
of $ab$ in $G/D$. From this It follows that $K$ has index 2 in $G$. Hence we have $G = K \rtimes \gp{a} \cong L \rtimes \f{Z}_2$.
Identifying $s=d , t=ab$ we see that conjugation by $a$ gives the asserted automorphism of $K$.
\end{proof}

For $\om \in \Om$ let  $L_\om=\gp{d_\om,ab_\om}\le G_\om$. By virtue of the  relations 
$a^2=b_\om^2=c_\om^2=d_\om^2=b_\om c_\om d_\om=1$ we see that $L_\om$ is a normal subgroup of index 2
in $G_\om$ and hence share many properties with $G_\om$. Let us denote by $T_\om=\{d_\om,ab_\om\}$
and $\m{L}_\om=\{(L_\om,T_\om) \mid \om \in \Om\}\subset \m{M}_2$.

%

\medskip

\textbf{Proof of the main Theorem:}

\medskip

a)
 Consider the map $\phi : \Om \to \m{L}$ given by $\om \mapsto (L_\om,T_\om)$. $\phi$
 is continuous since, if $w_n$ converges to $w$, then by Theorem \ref{grig84} $(G_{\om_n},S_{\om_n})$
 converges to $(G_\om,S_\om)$ and hence by Lemma \ref{lem1} 
 $(L_{\om_n},T_{\om_n})$
 converges to $(L_\om,T_\om)$. To see that $\phi$ is injective: By \cite[Section 5]{MR764305},
 the following is true: Given $\om_1\neq \om_2$ in $\Om$, there exists $u\in F_4$ 
 (depending on $\om_1$ and $\om_2$), such that $u$ is trivial in $G_{\om_1}$ and nontrivial in 
 $G_{\om_2}$ (this amounts to saying that the map $a\mapsto a,b_{\om_1}\mapsto b_{\om_2},
 c_{\om_1}\mapsto c_{\om_2},d_{\om_1}\mapsto d_{\om_2}$ does not extend to an isomorphism
 from $G_{\om_1}$ to $G_{\om_2}$ i.e.,
 $(G_{\om_1},S_{\om_1})$ and 
 $(G_{\om_2},S_{\om_2})$ are distinct points in $\m{M}_4$).  One observes that
 such $u$ is a 2 power and (since $L_\om$ has index 2 in $G_\om$) its image in $G_\om$ 
 lies in  $L_\om$.
 Therefore the image of $u$ in $L_{\om_1}$ is trivial but its image in $L_{\om_2}$ is nontrivial
 which implies that $(L_{\om_1},T_{\om_1})$ and $(L_{\om_2},T_{\om_2})$ are distinct. This shows
 that $\phi$ is injective and by compactness we have that $\phi$ is a homeomorphism. 
 
b) This follows from  Theorem \ref{grig84} and the fact that $L_\om$ has finite index in $G_\om$.

c) By Theorem \ref{thm2} $L_{000\ldots}$ is isomorphic to $L$ and it is immediate 
from the definition of the groups that $G_{000\ldots}\cong G_{111\ldots}\cong G_{222\ldots}$
and $L_{000\ldots}\cong L_{111\ldots}\cong L_{222\ldots}$.

d)  This follows from part a).

As a corollary we obtain the following:

\begin{cor}\label{cor1}
 The Lamplighter group $L$ is a condensation group.
\end{cor}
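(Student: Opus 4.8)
The plan is to read the corollary directly off the Main Theorem, whose two relevant conclusions have already been established. First I would fix the constant sequence $\om=000\ldots\in\Om$ and invoke part (c), which gives $L_{000\ldots}\cong L$. By part (d), the marked group $(L_{000\ldots},T_{000\ldots})$ lies in the perfect kernel $\kappa(\m{M}_2)$, so $L_{000\ldots}$ is a condensation group by definition.

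It then remains only to pass from this particular marking to the abstract group $L$. Being a condensation group is an isomorphism invariant independent of the chosen generating set; this is precisely \cite[Lemma 1]{MR2278053}, recalled in the Preliminaries. Hence the isomorphism $L\cong L_{000\ldots}$ transports the condensation property, and $L$ is a condensation group.

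There is no genuine obstacle at the level of the corollary: the whole content lies in the Main Theorem, and the only point to check is the isomorphism-invariance just cited. If pressed to recall where the substance sits, I would point to part (d). By part (a) the set $\m{L}$ is homeomorphic to $\Om$, hence a Cantor set, and as the continuous image of the compact space $\Om$ in the Hausdorff space $\m{M}_2$ it is closed. For $x\in\m{L}$ and any open $U\ni x$ in $\m{M}_2$, the intersection $U\cap\m{L}$ is a nonempty relatively open subset of the Cantor set $\m{L}$, hence uncountable, so $U$ itself is uncountable; by the characterization in \cite[I.6]{MR1321597} this places $x$ in $\kappa(\m{M}_2)$. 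Finally, I would note the independent route flagged in the Introduction: $L$ admits the infinite minimal presentation $\gpr{s,t}{s^2,[s,s^{t^i}]\;i\ge 1}$, and a group with an infinite minimal presentation is a condensation group, giving a proof that does not rely on the tree-automaton family $\m{G}$.
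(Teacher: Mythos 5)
Your proof is correct and follows exactly the paper's route: the corollary is read off directly from parts (c) and (d) of the Main Theorem, with the passage from the marked group $(L_{000\ldots},T_{000\ldots})$ to the abstract group $L$ justified by the isomorphism-invariance of the condensation property (\cite[Lemma 1]{MR2278053}). Your closing remarks — the sketch of why part (d) follows from part (a) and the alternative argument via the minimal presentation — likewise match what the paper itself establishes (the latter being precisely its Section 4).
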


\section{Minimal Presentations of the Lamplighter groups}

For a subset $A\subset G$ of a group let $\left<\left< A \right> \right>$ denote the normal subgroup
 generated by $A$.
  A presentation $\gp{X \mid R}$ is called \textit{minimal} if for every $r \in R$ we have 
  $r\notin \left<\left<R\setminus\{r\} \right> \right>$. The following is well known.

 \begin{prop}
  Let $\left< X \mid r_1,r_2,\ldots \right>$ be an infinite minimal presentation where $|X|=k$. Then the marked group 
  $(G,X)$ lies in the condensation part of $\m{M}_k$.
 \end{prop}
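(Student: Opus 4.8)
The plan is to exhibit, inside every neighborhood of $(G,X)$, uncountably many pairwise distinct marked groups; by the characterization of condensation points as those every open neighborhood of which is uncountable (see \cite[I.6]{MR1321597}), this places $(G,X)$ in $\kappa(\m{M}_k)$. Write the relator set as $R=\{r_1,r_2,\ldots\}$, and for each subset $S\subseteq \bN$ set $N_S=\left<\left< r_i\mid i\in S\right>\right>\le F_k$ and $G_S=F_k/N_S$, marked by the images $X$ of the free basis; thus $G_{\bN}=G$.

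First I would establish that the assignment $S\mapsto (G_S,X)$ is injective. If $S\neq S'$, say $j\in S\setminus S'$, then $r_j\in N_S$, while minimality gives $r_j\notin \left<\left< R\setminus\{r_j\}\right>\right>\supseteq N_{S'}$ (as $S'\subseteq \bN\setminus\{j\}$); hence $r_j=1$ in $G_S$ but $r_j\neq 1$ in $G_{S'}$, so the two marked groups are distinct. In particular the whole power set $2^{\bN}$ injects into $\m{M}_k$.

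Next I would localize this family near $(G,X)$. Since $N_S\subseteq N_{\bN}$, there is a canonical surjection $G_S\twoheadrightarrow G$, so every word nontrivial in $G$ remains nontrivial in $G_S$. For the other direction, fix $N$; the labeled ball of radius $N$ in the Cayley graph of $(G,X)$ is determined by the finite set of words $w$ with $|w|\le 2N$ together with the information of which of them equal $1$ in $G$. Each such $w$ with $w=1$ in $G$ lies in $N_{\bN}$, hence is a product of conjugates of finitely many relators; collecting the finitely many indices occurring across these finitely many words yields a finite set $F\subseteq \bN$ with $w\in N_S$ whenever $S\supseteq F$. For any such $S$ the groups $G_S$ and $G$ then satisfy exactly the same relations $w=1$ with $|w|\le 2N$, so their $N$-balls coincide and $(G_S,X)$ lies within distance $2^{-N}$ of $(G,X)$.

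Finally I would count. Because $R$ is infinite, the family $\{S\subseteq \bN\mid S\supseteq F\}$ is in bijection with the power set of the infinite set $\bN\setminus F$ and is therefore uncountable; by the injectivity above it contributes uncountably many pairwise distinct points of $\m{M}_k$, all within distance $2^{-N}$ of $(G,X)$. As $N$ was arbitrary, every neighborhood of $(G,X)$ is uncountable, so $(G,X)\in\kappa(\m{M}_k)$. The only delicate step is the localization: one must verify carefully that controlling the finitely many short relations of $G$ through the finite set $F$, together with the automatic preservation of non-relations under $G_S\twoheadrightarrow G$, indeed forces agreement of the entire $N$-ball; everything else is bookkeeping with normal closures, with the infiniteness of $R$ supplying the uncountable index set.
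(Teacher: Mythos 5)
Your proof is correct and takes essentially the same approach as the paper: both arguments show every ball $B((G,X),2^{-N})$ is uncountable by forming the quotients $F_k/\left<\left< r_i \mid i\in S\right>\right>$ for the uncountably many $S$ containing a fixed finite index set that accounts for all short relations of $G$, with minimality guaranteeing these marked groups are pairwise distinct. Your only departures are cosmetic: you allow an arbitrary finite set $F$ where the paper uses an initial segment $\{1,\ldots,M\}$, and you make explicit (via the surjection $G_S \twoheadrightarrow G$) the preservation of non-relations that the paper's ``clearly'' leaves implicit.
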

 
 \begin{proof}It is enough to show that any open ball around $(G,X)$  is uncountable.
  Let $B=B((G,X),2^{-N})$ be a ball of radius $2^{-N}$ around $(G,X)$. 
  A marked group $(H,T)\in\m{M}_k$ lies in $B$ if and only if  for all $w\in F_k$ such that $|w|\le 2N+1$, we have 
  $w=1\;\mbox{in}\;\;G \iff w=1\;\mbox{in}\;H.$
  Let $A=\{w\in F_k \mid |w|\le 2N+1\;\mbox{and}\;w=1\;\mbox{in}\;G\}$.
  Choose $M=M(N)\in \f{N}$  large enough so that $A\subset \left<\left< r_1,r_2,\ldots,r_M\right>\right>$.
  For any subset $U\subset\f{N}$ such that $\{1,2,\ldots,M\}\subset U$, let $(G_U,X)$ be the group
  $\left<X\mid r_i, i\in U \right>$. Clearly all $(G_U,X)\in B$ and since the initial presentation is minimal
  all of them are distinct marked groups. Hence $B$ is uncountable. 
 \end{proof}
We  will give an alternative proof of Corollary \ref{cor1} by showing that the  standard  presentation of $L$ is minimal.

  For a group $G$  and a subset 
  $S\subset G$ let $$T_S=\{(s_1g,s_2g)\mid s_1,s_2 \in S, g \in G\}\subset G\times G$$

\begin{thm}\cite{MR0120269}
 Let $G$ and $H$ be two groups and $S\subset G$ be a subset. Then there exists a group $W=W(H,G,S)$
 (called the circle product  of $G$ and $H$ with respect 
 to $S$) with the following properties:

 \begin{itemize}
 \item $W$ contains subgroups $H_g,g\in G$ all isomorphic to $H$,
  \item $W$ is generated by $G$ and $H_1$,
  \item The subgroup $K=\gp{H_g \mid g\in G}$ is normal in $W$and $W=K\rtimes G$,
  \item For $h_{g_1}\in H_{g_1}$ and $g_2\in G$  we have $h_{g_1}^{g_2}\in H_{g_1g_2}$,
  \item $[H_{g_1},H_{g_2}]=1$ if and only if $(g_1,g_2)\in T_S$.
 \end{itemize}
 \end{thm}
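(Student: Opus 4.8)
The plan is to build $W$ as a semidirect product $K \rtimes G$ in which $K$ is a graph product of copies of $H$, with the underlying graph dictated by $T_S$, and then to read off the five listed properties from the normal form theory of graph products. First I would record the crucial symmetry of $T_S$: since $(g_1,g_2)=(s_1g,s_2g)$ forces $(g_1g_3,g_2g_3)=(s_1(gg_3),s_2(gg_3))$, right translation by any $g_3\in G$ preserves membership in $T_S$; equivalently $(g_1,g_2)\in T_S$ if and only if $g_1g_2^{-1}\in SS^{-1}$. Define a graph $\Gamma$ with vertex set $G$ and an edge joining distinct $g_1,g_2$ exactly when $(g_1,g_2)\in T_S$. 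The observation above says that right multiplication $g\mapsto gg_3$ is a graph automorphism of $\Gamma$ for every $g_3\in G$.

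Next I would let $K$ be the graph product over $\Gamma$ of vertex groups, each a copy $H^{(g)}$ of $H$; concretely $K = P/N_0$, where $P=\mathop{\ast}_{g\in G}H^{(g)}$ is the free product and $N_0$ is the normal closure in $P$ of all commutators $[H^{(g_1)},H^{(g_2)}]$ with $(g_1,g_2)\in T_S$. Writing $H_g$ for the image of $H^{(g)}$, the normal form theorem for graph products gives at once that each $H_g\cong H$ and that, for distinct $g_1,g_2$, the relation $[H_{g_1},H_{g_2}]=1$ holds precisely when $(g_1,g_2)\in T_S$; non-adjacent vertex groups generate their free product and so fail to commute as long as $H$ is nontrivial. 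This yields the first and last properties. Because $G$ acts on $\Gamma$ by graph automorphisms, it permutes the defining relators of $K$ among themselves, hence acts on $K$ by automorphisms sending $H_g$ to $H_{gg_3}$ under $g_3$; this is exactly the conjugation rule $h_{g_1}^{g_2}\in H_{g_1g_2}$. Forming $W=K\rtimes G$ with this action then gives the normality of $K$ and the splitting $W=K\rtimes G$, while $W$ is generated by $G$ and $H_1$ because the $G$-conjugates of $H_1$ are precisely the $H_g$, and these generate $K$.

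The delicate point---indeed the whole content---is the non-collapse: that imposing the commutators for pairs in $T_S$ neither kills any vertex group nor forces commutation of a pair outside $T_S$. I would discharge this by invoking the reduced-word theory for graph products rather than reproving it, attending to two bookkeeping issues. The first is the diagonal: for $S\neq\emptyset$ one has $(g,g)\in T_S$ for every $g$, so the equivalence $[H_{g_1},H_{g_2}]=1\iff(g_1,g_2)\in T_S$ should be read for $g_1\neq g_2$ unless $H$ is abelian, as it is in the intended application $H=\f{Z}_2$, in which case loops may be adjoined to $\Gamma$ harmlessly. The second is that the $G$-invariance of the relator set guarantees $N_0$ is $G$-stable, so the $G$-action genuinely descends to $K$ and the semidirect product is well defined.

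In the spirit of \cite{MR0120269}, the same conclusion can be reached by a direct Reidemeister--Schreier and normal form computation carried out inside $P\rtimes G$, but the graph-product formulation isolates the essential combinatorics and makes the two faithfulness assertions immediate consequences of a standard structure theorem. I expect this faithfulness step to be the only real obstacle; all the remaining items (generation, normality, the semidirect splitting, and the conjugation rule) follow formally once $K$ and the $G$-action have been set up.
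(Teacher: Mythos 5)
Your proposal is correct and follows essentially the same route the paper itself indicates for this theorem: it cites the original circle-product construction but then remarks that $W$ can be realized as the graph product $K$ of copies of $H$ over the graph with vertex set $G$ and edges $T_S$, with $G$ acting by right translation so that $W\cong K\rtimes G$ --- which is precisely your construction, with the faithfulness assertions delegated to the normal form theorem for graph products. Your added caveats (nontriviality of $H$ for the non-commutation of non-adjacent vertex groups, and the diagonal pairs $(g,g)\in T_S$ when $H$ is nonabelian) are sensible sharpenings of the statement rather than deviations from the paper's approach.
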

 Note that $W$ can also be realized  by using graph products: Let $\Gamma$ be the graph with vertex set
 $G$ and edges $T_S$, and let $K$ be the graph product where each vertex group is $H$. Clearly $G$ acts on 
 $K$ and one can see that $W\cong K \rtimes G$.

 \begin{prop}
 For every $n\ge 2$,the presentation 
  $$\gpr{s,t}{s^n,[s,s^{t^i}]\;i\ge 1}$$
  is a minimal presentation of $\f{Z}_n\wr \f{Z}$.
 \end{prop}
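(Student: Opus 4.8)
The plan is to prove minimality relator-by-relator, using the circle products $W(H,\f{Z},S)$ of \cite{MR0120269} as witnesses. Write $R=\{s^n\}\cup\{[s,s^{t^i}]:i\ge 1\}$ and let $F=\gp{s,t}$. That $\gpr{s,t}{R}$ presents $\f{Z}_n\wr\f{Z}$ is the case $H=\f{Z}_n$, $S=\f{Z}$ of the circle product, since then all lamp groups commute and have order $n$. For minimality I must show $r\notin\gp{\gp{R\setminus\{r\}}}$ for each $r\in R$; equivalently, for each $r$ I will exhibit a group $W$ and elements $\sigma,\tau\in W$ satisfying every relation of $R\setminus\{r\}$ but with $r(\sigma,\tau)\ne 1$. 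Such $(\sigma,\tau)$ induce a homomorphism $\gpr{s,t}{R\setminus\{r\}}\to W$ carrying $r$ to a nontrivial element, which is exactly what is needed.

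First I would record how the relations are read off a circle product. Take $G=\f{Z}=\gp{t}$, identify a subset $S\subseteq\f{Z}$ with $\{t^a:a\in S\}$, and let $\sigma$ generate the copy $H_1\cong H$. Since conjugation maps $H_{g_1}$-elements into $H_{g_1g_2}$ we have $\sigma^{t^i}\in H_{t^i}$, and the last clause of the theorem gives $[\sigma,\sigma^{t^i}]=1$ iff $(1,t^i)\in T_S$. A direct computation of $T_S$ shows $(1,t^i)\in T_S$ iff $t^i\in SS^{-1}$, i.e. iff $i\in S-S$, where $S-S=\{a-b:a,b\in S\}$ is symmetric and contains $0$. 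Hence in $W(H,\f{Z},S)$ the relation $[s,s^{t^i}]=1$ holds exactly when $i\in S-S$, while $s^n=1$ holds when $H$ has exponent dividing $n$ and fails when $H=\f{Z}$.

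The relator $s^n$ is then immediate: take $H=\f{Z}$ and $S=\f{Z}$, so $W=\f{Z}\wr\f{Z}$. Here $S-S=\f{Z}$, so every relation $[s,s^{t^i}]=1$, $i\ge 1$, holds, whereas $\sigma$ has infinite order and $\sigma^n\ne 1$. Thus $s^n\notin\gp{\gp{[s,s^{t^i}]:i\ge 1}}$.

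The commutator relators are the crux. Fix $j\ge 1$. I need a set $S_j\subseteq\f{Z}$ with $i\in S_j-S_j$ for every $i\ge 1$ with $i\ne j$, but $j\notin S_j-S_j$; with $H=\f{Z}_n$ the group $W(\f{Z}_n,\f{Z},S_j)$ then satisfies $s^n=1$ and all retained relations $[s,s^{t^i}]=1$ ($i\ne j$), while $[H_1,H_{t^j}]\ne 1$, and since $H_1=\gp{\sigma}$ and $H_{t^j}=\gp{\sigma^{t^j}}$ are cyclic this forces $[\sigma,\sigma^{t^j}]\ne 1$. So $W(\f{Z}_n,\f{Z},S_j)$ witnesses minimality of $[s,s^{t^j}]$, and everything reduces to realizing the difference set $S_j-S_j=\f{Z}\setminus\{\pm j\}$. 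The naive choice of alternating blocks of $j$ consecutive integers fails, since its difference set also omits every odd multiple of $j$; this is the trap to avoid. Instead I would use scattered two-element blocks: for each $d\ge 1$ with $d\ne j$ place a pair $\{p_d,p_d+d\}$, taking $p_1=0$ and choosing each subsequent $p_d$ to exceed every previously used integer by more than $j$. Each block realizes the difference $d\ne j$; no block has internal difference $j$; and, by the spacing, every difference between two distinct blocks exceeds $j$ in absolute value. Hence $S_j-S_j=\f{Z}\setminus\{\pm j\}$, as required. This difference-set construction is the one non-formal step and the main obstacle; the remaining verifications are routine consequences of the listed properties of the circle product.
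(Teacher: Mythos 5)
Your proposal is correct and takes essentially the same route as the paper: both prove minimality relator-by-relator via the circle product $W(\f{Z}_n,\f{Z},S)$ and a set $S\subset\f{Z}$ with difference set exactly $\f{Z}\setminus\{\pm j\}$, and the paper's explicit sequence $a_0,a_1,a_2,\ldots$ is precisely your ``scattered two-element blocks'' construction (pairs realizing each difference $d\neq j$, with consecutive pairs separated by gaps larger than $j$). The only cosmetic differences are that you give an explicit witness ($\f{Z}\wr\f{Z}$) for the non-redundancy of $s^n$ where the paper says ``clearly,'' and you spell out the reduction from $[H_1,H_{t^j}]\neq 1$ to $[\sigma,\sigma^{t^j}]\neq 1$ via cyclicity, which the paper leaves implicit.
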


 \begin{proof}
  Clearly the relation $s^n$ is not redundant. For $i\ge 1$ let  $r_i=[s,s^{t^i}]$ and suppose that
  for some $m\ge 1$ $r_m$ is redundant.  Let 
  $$a_0=0\;\;,\;\;a_{2j}=j(m+1)+(1+\ldots+j)\;\;\;j\ge 1$$ and
  $$\begin{array}{cc}
       a_{2j+1}=\left\{
  \begin{array}{ccc}
  a_{2j} + j+1  &\text{if}& j< m-1 \\
  a_{2j} + j+2  &\text{if}& j\ge m-1
     
    \end{array}\right.
    
    &,\;\; j\ge 0
    \end{array}.
  $$
Note that 
$$
\begin{array}{cc}

a_{2j+1}-a_{2j}=\left\{
\begin{array}{ccc}
 j+1 & \text{if}& j<m-1 \\
 
j+2 & \text{if} & j\ge m-1
\end{array}\right.
&
,\;j\ge0
\end{array}
$$
and
$$|a_k-a_\ell|>m\;\;\text{if}\;\; |k-\ell| \ge 2. $$
  Finally let  $S=\{a_0,a_1,a_2,\ldots \}\subset  \f{Z}$ and observe that the set $S-S=\f{Z}\setminus \{-m,m\}$.
 Form the circle product $W=W(\f{Z}_n,\f{Z},S)$ with generators $x,y$. By the properties of $W$ we have
 for $i\ge 1$
 $$[x,x^{y^i}]=1\iff(0,i)\in T_S \iff i \in S-S \iff i\neq m.  $$
Therefore, under the assumption that $r_m$ is redundant in $\f{Z}_n\wr \f{Z}$, the map
$s\mapsto x,t\mapsto y$ defines a homomorphism from $\f{Z}_n\wr \f{Z}$ to $W$ which contradicts 
the fact that $r_m=1$ in $\f{Z}_n\wr \f{Z}$ but $[x,x^{y^m}]\neq 1$ in $W$.
 \end{proof}

\bibliographystyle{alpha}
\bibliography{Condensation.bib}
 
\end{document}